\newcommand{\E}{\mathbb{{E}}}
\newcommand{\R}{\mathbb{R}}
\newcommand{\Z}{\mathbb{Z}}
\newcommand{\PP}{\mathbb{P}}
\newcommand{\mb}[1]{\mathbb{{#1}}}
\newcommand{\mc}[1]{\mathcal{{#1}}}
\newcommand{\p}[1]{\mb{P} \left( #1 \right)}
\newcommand{\e}{\varepsilon}
\newcommand{\dd}{\mathrm{d}}
\newcommand{\1}{\mathbf{1} }
\DeclareMathOperator{\conv}{conv}
\DeclareMathOperator{\var}{Var}
\DeclareMathOperator{\Pois}{\textbf{Pois}}
\newcommand{\scal}[2]{\left\langle #1, #2\right\rangle}
\theoremstyle{definition}
\newtheorem{theorem}{Theorem}
\newtheorem{lemma}[theorem]{Lemma}
\theoremstyle{remark}
\newtheorem*{remark*}{Remark}
\theoremstyle{definition}
\begin{document}

\newgeometry{tmargin=2.5cm, bmargin=2.5cm, lmargin=2.5cm, rmargin=2.5cm}

\title[]{Log-concavity and discrete degrees of freedom}

\author{Jacek Jakimiuk}
\address{University of Warsaw}
\email{j.jakimiuk4@student.uw.edu.pl}

\author{Daniel Murawski}
\email{dk.murawski@student.uw.edu.pl}

\author{Piotr Nayar}
\thanks{P.N. was supported by the National Science Centre, Poland, grant 2018/31/D/ST1/01355}
\email{nayar@mimuw.edu.pl}

\author{Semen Słobodianiuk}
\email{ss407112@students.mimuw.edu.pl}

\begin{abstract}
We develop the notion of discrete degrees of freedom of a log-concave sequence and use it to prove that geometric distribution minimises R\'enyi entropy of order infinity under fixed variance, among all discrete log-concave random variables in $\Z$.  We also show that the quantity $\mathbb{P}(X=\mathbb{E} X)$ is maximised, among all ultra-log-concave random variables with fixed integral mean, for a Poisson distribution. 
\end{abstract}

\maketitle

{\footnotesize
\noindent {\em 2010 Mathematics Subject Classification.} Primary 60E15; Secondary 94A17.

\noindent {\em Key words. log-concavity, ultra-log-concavity, Poisson distribution, R\'enyi entropy} 
}
\bigskip

\section{Introduction}\label{sec:intro}

Log-concavity is an important notion  across many disciplines of mathematics, computer science and beyond. In this article we restrict our attention to log-concave functions on the real line $\R$ or on the set of integers $\Z$. Let $A \subseteq \R$ be an interval. A nonnegative function $f:A \to \R$ is called log-concave if $f=e^{-V}$ with $V:A \to \R \cup \{+\infty\}$ convex. The set of such functions will be denoted by $\mc{L}$. Many sharp inequalities for log-concave functions can be formulated as the following type of optimisation problems: find the maximum of a convex functional $\Phi:\mc{L}\to \R$ under $n$ linear constraints $\Phi_i(f)=m_i$, $i=1,\ldots, n$, where $\Phi_i(f)= \int f g_i$ for some function $g_i$. Usually the functionals $\Phi_i$ correspond to various moment constraints, namely $g(x)=|x|^p$ gives the so-called $p$th moment $\int |x|^p f(x) \dd x$. If one wants to consider probability densities $f$ with fixed variance, one needs to introduce three linear constraints given by $g_1 \equiv 1$ (fixing $f$ to be a probability density), $g_2(x)=x$ (fixing the mean $\int x f(x) \dd x$) and $g_3(x)=x^2$ (fixing the second moment $\int x^2 f(x) \dd x$). When more than one or two constraints are present, the above optimisation problem is usually very challenging, one of the reasons being the fact that $\mc{L}$ is not closed under summation. To overcome this difficulty Fradelizi and Gu\'edon introduced in \cite{FG06} a powerful notion of degrees of freedom. The function $f \in \mc{L}$ has $d$ degrees of freedom if there exist linearly independent functions $q_1, \ldots, q_d$ and $\e>0$ such that $f_\delta = f+ \delta_1 q_1+ \ldots + \delta_d q_d$ is log-concave for all $\delta_1, \ldots, \delta_d \in (-\e,\e)$. Intuitively,  $f$ belongs to an affine subspace of dimension $d$ which locally around $f$ is contained in $\mc{L}$. The set of all $f_\delta$ will be called the \emph{cube} around $f$.  Suppose in our optimisation problem we are given $n$ constraints. Then if $d>n$, by a simple linear algebra argument one can find within the cube a linear subspace $\mc{C}$ of dimension $d-n$ such that all functions in this subspace satisfy the constraints. Since this is a linear subspace, one can find $f_\delta, f_{-\delta} \in \mc{C}\cap\mc{L}$. But $f= \frac12(f_\delta+ f_{-\delta})$ and thus $f$ is not an extreme point of $\mc{L}'$, the subset of functions in $\mc{L}$ satisfying the constraints. On the other hand, due to Krein-Milman theorem it is known that the supremum of $\Phi$ on $\mc{L}'$ is attained on some extremal point. It is therefore enough to consider the problem only for function having at most $n$ degrees of freedom. Fradelizi and Gu\'edon showed that such functions must be of the form $f=e^{-V}$ with $V$ being the maximum of $n-1$ affine functions. For such simple functions one can hope to solve the problem by an exact computation. This approach was used in \cite{MNT21, BN21} to lower bound entropy and R\'enyi entropy of log-concave random variables in terms of variance.  It was also used in \cite{M23} by the second named author in  the context of moment comparison.           

Let us now deal with log-concave sequences. We fix a nonnegative integer $n$ and let $[n]=\{0,1,\ldots, n\}$. We say that a sequence $(a(k))_{k \in [n]}$ of nonnegative real numbers is log-concave if $\{k \geq 0: a(k)>0\}$ is a discrete interval and the inequality $a(k)^2 \geq a(k+1) a(k-1)$ is satisfied for $k=1, \ldots, n-1$. This definition can clearly be extended to log-concave sequences on $\Z$. For a random variable $X$ with probability mass function $p$ the mean functional is defined via $p \to \sum_k k p(k)$ and the $p$th moment functional is  $p \to \sum_k |k|^p p(k)$. The R\'enyi entropy functional of order $\alpha \ne 1$ is $H_\alpha(p)=\frac{1}{1-\alpha} \ln\left(\sum_k p(k)^\alpha \right)$, whereas for $\alpha=1$ one recovers in the limit the Shannon entropy $H(p)=-\sum_k p(k) \ln p(k)$,  with the convention $0 \log 0 =0$. One also defines $H_\infty(p)=-\ln M(p)$, where $M(p)=\max_z p(z)$. A random variable $X$ is log-concave if its probability mass function $p$ is log-concave. 

In this article we develop a discrete version of the concept of degrees of freedom and give two applications. In a recent work by Aravinda \cite{A23}, published after the first version of the present article appeared online, the author developed the same concept of degrees of freedom with slightly different proof (in \cite{A23} the continuous case is used as a lemma, whereas here we provide a direct argument). Aravinda then uses this tool to prove that for a log-concave random variable $X$ one has $M(X)^2 (1+\var(X)) \leq 1$. The equality is achieved asymptotically for a geometric distribution $p(k)=\theta(1-\theta)^k \1_{\{0,1,\ldots\}}(k)$  when $\theta \to 0$.  Here we  prove a stronger version of this bound for which equality occurs for every geometric distribution.

\begin{theorem}\label{thm:ent-var}
Let $X$ be a log-concave random variable in $\Z$. Then 
\[
	M(X)^2 \var(X) + M(X) \leq 1
\]
\end{theorem}       

Let us also mention that in \cite{BMM21} the authors proved than any discrete log-concave distribution satisfies the inequality
\[
	\frac{1}{\sqrt{1+ 12 \var(X)}} \leq M(X) \leq \frac{1}{\sqrt{\frac14 + \var(X)}}. 
\] 
The result of Aravinda replaces the constant $\frac14$ with $1$. The left inequality becomes equality for  uniform distributions on  discrete intervals and is therefore optimal only for a countable set of values of $\var(X)$. In \cite{BMM21} it is also proved that in the class of symmetric log-concave distributions one has $M(X)^2 (1+2 \var(X)) \leq 1$, which is optimal for all values of $\var(X)$ as the equality occurs for two-sided geometric distributions. The  inequality from Theorem \ref{thm:ent-var} enjoys the same feature of being optimal for all values of $\var(X)$.    

Our second application concerns the so-called ultra-log-concave discrete random variables. A random variable $X$ taking values in the set of nonnegative integers is called ultra-log-concave if  $\mu(n)/\Pi_\lambda(n)$ is log-concave, where $\Pi_\lambda(n)= e^{-\lambda} \frac{\lambda^n}{n!}$ stands for the probability mass function of the Poisson random variable $\Pois(\lambda)$ with parameter $\lambda>0$. Ultra-log-concave random variables attracted considerable attention of researchers over last two decades. The definition itself is, according to our best knowledge, due to Pemantle \cite{P00} who introduced it in the context of  theory of negative dependence of random variables. Walkup's theorem (see Theorem 1 in \cite{W76}) implies that convolution of two independent ultra-log-concave random variables is ultra-log-concave, see also  \cite{L97, G09, NO12, MM21} for generalisations and different proofs.

More recently, Johnson in \cite{J07} considered ultra-log-concave random variables in the context of Shannon entropy $H(X)$.  He proved that the Poisson distribution maximises entropy in the class of ultra log–concave distributions under fixed mean. The author uses an interesting semigroup technique based on  adding Poisson random variable and the operation of \emph{thinning}. Simple proof of this result was given by Yu in \cite{Y09}. The author proved that if $X$ is ultra-log-concave and $Z \sim \Pois(\E X)$ so that $\E X = \E Z$, then $\E \phi(X) \leq \E \phi(Z)$ for any convex function $\phi:\R \to \R$. Johnson's result follows from this statement easily by using Gibbs' inequality. In \cite{Y09} a generalization of the above results to the so-called compound  distributions is also discussed, providing generalizations and elementary proofs of the results from \cite{JKM13}.  The following theorem goes beyond the convex case discussed by Yu in \cite{Y09}. 

\begin{theorem}\label{thm:poiss}
Let $X$ be an ultra-log-concave random variable with integral mean. Then
\[
	\p{X=\E X} \geq  \p{\Pois(\E X) = \E X}.
\]
\end{theorem}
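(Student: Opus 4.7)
The plan is to develop a discrete analogue of the Fradelizi--Guédon degrees-of-freedom technique \cite{FG06}: the functional $\mu\mapsto\mu(m)$ is linear in $\mu$, while the admissible class is curved, so the infimum should be attained by a very rigid extremal family on which the inequality can be checked directly. Setting $\phi(k):=\log(\mu(k)\,k!)$, ultra-log-concavity of $\mu$ is equivalent to concavity of $\phi$ on its support interval $[a,b]\cap\Z$ (with $b$ possibly $+\infty$). The theorem becomes: minimise $e^{\phi(m)}/m!$ over concave $\phi$ with $\sum_k e^{\phi(k)}/k!=1$ and $\sum_k k\,e^{\phi(k)}/k!=m$, and show the infimum equals $\Pi_m(m)$.

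Define the discrete degrees of freedom of $\phi$ to be the number of interior indices $k$ at which the second difference $\phi(k{+}1)-2\phi(k)+\phi(k{-}1)$ is strictly negative --- the ``strict kinks.'' At each strict kink one has local room to perturb $\phi$ while keeping it concave. I would show that any $\phi$ with sufficiently many strict kinks admits a perturbation that respects the two moment constraints and strictly decreases $\phi(m)$; iterating, the infimum is approached by a $\phi^*$ with \emph{no} strict kinks, that is, affine on its support. Correspondingly $\mu^*(k)\propto \lambda^k/k!$ on some interval $[a,b]\ni m$, i.e.\ $\mu^*$ is a truncated Poisson on $[a,b]$ with parameter $\lambda=\lambda(a,b)$ determined by the mean equation.

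For such a truncated Poisson, the desired inequality $\mu^*(m)\ge\Pi_m(m)=e^{-m}m^m/m!$ is equivalent to the elementary-looking bound $(\lambda/m)^m\,e^m \ge \sum_{j=a}^b \lambda^j/j!$. Equality holds in the untruncated limit $(a,b)=(0,\infty)$, $\lambda=m$. For any strict truncation, the mean condition forces $\lambda\ne m$, and I would close the argument by a monotonicity in $\lambda$ together with the fact that $k\mapsto\lambda^k/k!$ peaks near $k=\lfloor\lambda\rfloor$. Integrality of the mean $m$ is essential: it is precisely when $\lambda=m$ is an integer that $k\mapsto\lambda^k/k!$ attains its maximum on $\Z_{\ge 0}$ exactly at $k=m$ (tied with $k=m-1$), which pins down the equality case.

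The main obstacle is the degrees-of-freedom reduction: one must construct discrete perturbations that simultaneously preserve concavity, both moment constraints, and strictly decrease $\phi(m)$. This requires careful bookkeeping because each strict kink provides only a single constrained discrete perturbation direction, and there is no continuous shortcut. The verification step, though analytic, should then follow by a one-variable calculus argument exploiting the monotone dependence of the mean of a truncated Poisson on its parameter $\lambda$.
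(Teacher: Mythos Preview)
Your reduction to truncated Poisson via discrete degrees of freedom is exactly the paper's strategy (their Lemmas~\ref{lem:ddf}--\ref{lem:convex} formalise it through a compactness/Krein--Milman argument rather than iterative perturbation, but the content is the same, and your iterative description would need that packaging anyway to terminate). The genuine gap is in the verification step for the truncated family.

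You assert that $(\lambda/m)^m e^m \ge \sum_{j=a}^b \lambda^j/j!$ under the mean constraint ``should follow by a one-variable calculus argument exploiting the monotone dependence of the mean on $\lambda$,'' together with the location of the mode of $k\mapsto \lambda^k/k!$. But monotonicity of the mean in $\lambda$ does not by itself compare the two sides: both are increasing in $\lambda$, and one has $(\lambda/m)^m e^m \le e^\lambda$ while $\sum_{j=a}^b \lambda^j/j! \le e^\lambda$, so the obvious bounds point the wrong way. The paper's actual argument is more substantial. It substitutes the mean relation $m=\lambda f'(\lambda)/f(\lambda)$ (with $f(x)=\sum_{j=a}^b x^j/j!$) to produce the single-variable function
\[
h(x)=x\,\frac{f'(x)}{f(x)}\Bigl(1-\log\frac{f'(x)}{f(x)}\Bigr)-\log f(x),
\]
and shows $h(x)\ge 0$ for \emph{all} $x>0$. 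Two nontrivial ingredients are needed: a Cauchy--Schwarz inequality to show that the factor $-x(f')^2+xff''+ff'$ in $h'$ is nonnegative, and the log-concavity of $x\mapsto f(x)$ on $(0,\infty)$ (a theorem of Gurvits on power series with log-concave coefficients) to show that $f'/f$ is decreasing, so that the remaining factor $-\log(f'/f)$ changes sign exactly once. Together these force $h$ to have a unique minimum at the point $y_0$ where $f'(y_0)=f(y_0)$, and there $h(y_0)=y_0-\log f(y_0)\ge 0$ is immediate since $f$ is a truncation of the exponential series. None of these steps is hinted at in your sketch.

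Finally, your claim that integrality of $m$ is ``essential'' and ``pins down the equality case'' via the mode of $k\mapsto\lambda^k/k!$ is a red herring. The paper's verification works for every real $n_0>0$ satisfying the mean relation and never uses that $n_0\in\Z$; integrality is needed only so that $\p{X=\E X}$ is a nonzero quantity for the original discrete $X$.
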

\noindent This theorem is motivated by Theorem 1.1 from \cite{AMM21}, where the concentration inequalities for ultra-log-concave random variables were derived. 

This article is organised as follows. In Section  \ref{sec:2} we develop the notion of discrete degrees of freedom. Section \ref{sec:3} is devoted to the proof of Theorem \ref{thm:ent-var}. Finally, in Section \ref{sec:4} we give a proof of Theorem \ref{thm:poiss}.

\section{Discrete degrees of freedom}\label{sec:2}

Suppose $p$ is a log-concave sequence supported in some finite discrete interval which without loss of generality can be assumed to be $[L]=\{0,1,\ldots, L\}$. We say that $p$ has $d$ \emph{degrees of freedom}  if there exist linearly independent sequences $q_1, \ldots, q_d$ supported in $[0,L]$ and $\e>0$ such that for all $\delta_1,\ldots, \delta_d \in (-\e,\e)$ the sequence
\[
	p+ \delta_1 q_1 + \ldots + \delta_d q_d
\] 
is log-concave  in $[0,L]$. 

We shall prove the following lemma describing sequences with small number of degrees of freedom. The proof is a rather straightforward adaptation of the argument presented in \cite{FG06}.

\begin{lemma}\label{lem:ddf}
Let $d \geq 1$. Suppose a positive log-concave sequence supported in $[L]$ has $d+1$ degrees of freedom. Then $p=e^{-V}$ with $V=\max(l_1, \ldots, l_d)$, where $l_1,\ldots, l_d$ are arithmetic progressions.
\end{lemma}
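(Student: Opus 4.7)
The plan is to translate the perturbation condition for $p$ into a linear condition on the ratio $r := q/p$ and then count dimensions. Let $V = -\log p$ and let $S \subseteq \{1,\ldots,L-1\}$ be the set of indices at which $p_k^2 = p_{k-1}p_{k+1}$, equivalently where $V_{k-1}+V_{k+1} = 2V_k$. The indices outside $S$ are the strict breakpoints of $V$; if there are $m-1$ of them, then $V$ is piecewise affine with $m$ maximal affine pieces. The goal is to show that $m = d$.

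For any single direction $q$, set $F_k(p) := p_k^2 - p_{k-1}p_{k+1}$ and Taylor expand
\[
F_k(p + \delta q) = F_k(p) + \delta A_k(q) + \delta^2 B_k(q),
\]
with $A_k(q) = 2p_k q_k - p_{k-1}q_{k+1} - p_{k+1}q_{k-1}$ and $B_k(q) = q_k^2 - q_{k-1}q_{k+1}$. For $k \in S$ one has $F_k(p) = 0$, so requiring $F_k(p+\delta q) \geq 0$ for \emph{both} signs of $\delta$ in a neighborhood of $0$ forces $A_k(q) = 0$. Writing $q_k = p_k r_k$ and using $p_k^2 = p_{k-1}p_{k+1}$ at $k \in S$, this condition collapses to the discrete harmonic identity $r_{k-1}+r_{k+1} = 2r_k$, which is equivalent to saying that $r$ is an arithmetic progression on each of the $m$ maximal affine pieces of $V$. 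The space of such $r$'s is parametrized by its $m+1$ values at the piece endpoints, hence has dimension exactly $m+1$.

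Next, verify sufficiency for simultaneous perturbations: if $q_1,\ldots,q_N$ all satisfy the first-order conditions with $r_i := q_i/p$ arithmetic on every piece, then for small $\delta_1,\ldots,\delta_N$ the sequence $p + \sum_i \delta_i q_i$ is log-concave. At each $k \in S$ the linear term vanishes by construction and the quadratic residual equals $p_k^2(R_k^2 - R_{k-1}R_{k+1})$ with $R := \sum_i \delta_i r_i$, again using $p_k^2 = p_{k-1}p_{k+1}$. Since linear combinations of sequences arithmetic on a given piece remain arithmetic on that piece, $R_k^2 - R_{k-1}R_{k+1}$ is the square of the common difference of $R$ on the piece containing $k$, hence nonnegative. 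At $k \notin S$, strict positivity of $F_k(p)$ is preserved by continuity for small enough $\delta_i$. Therefore the linear space of admissible perturbation directions has dimension exactly $m+1$, which is the number of degrees of freedom of $p$.

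The hypothesis that $p$ has $d+1$ degrees of freedom then forces $m = d$, so $V$ is piecewise affine with exactly $d$ pieces. Taking $l_j$ to be the affine function that agrees with $V$ on the $j$-th maximal affine piece, convexity of $V$ gives $V = \max(l_1,\ldots,l_d)$, and each $l_j$ is an arithmetic progression. The main technical obstacle is the sufficiency step: one must check that the linear first-order conditions are not tightened by the quadratic remainder when passing from single directions to simultaneous perturbations. This works out precisely because the identity $p_k^2 = p_{k-1}p_{k+1}$ on flat pieces causes the residual to collapse into the manifestly nonnegative square of the common difference of $R$.
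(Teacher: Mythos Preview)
Your proof is correct and takes a genuinely different route from the paper's. The paper argues by contrapositive: assuming $V$ is not a maximum of $d$ arithmetic progressions (so its slope sequence jumps at $k\ge d$ points $n_1<\cdots<n_k$), it \emph{constructs} explicit perturbation directions $1,V_0,\dots,V_k$, where each $V_i$ agrees with $V$ on $[0,n_i]$ and continues affinely thereafter. It then verifies by hand that $e^{-V}(1+\delta+\sum_i\delta_iV_i)$ remains log-concave, using that $t\mapsto t-\log(1+\mu t)$ is increasing convex for small $\mu$, and checks linear independence via a triangular support argument. This gives only the lower bound $(\text{degrees of freedom})\ge m+1$ when $V$ has $m$ affine pieces.

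You instead linearize the constraint $p_k^2\ge p_{k-1}p_{k+1}$ and show that the first-order condition at equality points, rewritten in the variable $r=q/p$, is exactly the discrete harmonic condition forcing $r$ to be arithmetic on each maximal affine piece of $V$; then you observe that the quadratic remainder at such points collapses to $p_k^2$ times the square of the common difference of $R=\sum_i\delta_ir_i$, so the necessary first-order conditions are already sufficient even for simultaneous perturbations. This yields the exact identity $(\text{degrees of freedom})=m+1$, strictly more than the paper proves, and the conclusion is immediate. Your approach is cleaner and more conceptual; the paper's has the virtue that the explicit directions $V_i$ mirror the continuous Fradelizi--Gu\'edon construction, making that analogy transparent. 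One small remark: your final line ``forces $m=d$'' reads the hypothesis as \emph{exactly} $d+1$ degrees of freedom; as the paper actually uses the lemma (via Lemma~\ref{lem:convex}) the hypothesis is ``at most $d+1$'', and then your identity gives $m\le d$, which still yields $V=\max(l_1,\dots,l_d)$ after padding.
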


\begin{proof}
Since $p$ is strictly positive and log-concave, it can be written in the form $p=e^{-V}$, where $V$ is convex. The  sequence $V'(n)=V(n+1)-V(n)$ is called the \emph{slope sequence}. Clearly the slope sequence is non-decreasing. We prove the lemma by contrapositive. We shall assume that $V$ cannot be written as a maximum of $d$ arithmetic progressions. Our goal is then to prove that $p$ has at least $d+2$ degrees of freedom. 

Define the sequence $n_0,\ldots, n_k$ inductively  by taking $n_0=0$ and $n_{i+1}=\min \{n > n_i: V'(n)>V'(n_i)\}$ as long as the set is non-empty. Thus $V'(n_0)< V'(n_1)< \ldots < V'(n_k)$ with $k \geq d$, as $V$ is not piecewise linear with at most $d$ pieces.


For $i=0,1,\ldots,k$ let us define the sequence $V_i$ via the expression
\[
	V_i(n) = \left\{ \begin{array}{ll}
	V(n) & n \in [0,n_i] \\
	V(n_i)+ V'(n_i)(n-n_i) & n \in [n_i,L]
	\end{array} \right. .
\]
It is not hard to show that $V_i$ are convex. We shall assume that $V'(n_0) \ne 0$ so that the sequence $V_0$ is not constant. If this is not the case it suffices to reflect the picture and use $V'(n_k)$ instead of $V'(n_0)$.  \\

\noindent \textit{Claim 1.} There exists $\e>0$ such that for all $\delta, \delta_0,\ldots, \delta_k \in (-\e,\e)$ the sequence
\[
	e^{-V}( 1+ \delta+ \delta_0 V_0 + \delta_1 V_{1} + \ldots + \delta_k V_{k})
\]  
is log-concave. 

\begin{proof}[Proof of Claim 1]
On each of the intervals $[n_i, n_{i+1}]$, $i=0,\ldots, k+1$, where we take $n_{k+1}=L$, the above sequence is given by the expression of the form $p(n) = e^{-V}(1+\mu_1 W+\mu_2 V)$, where $W$ is an arithmetic progression. We first check that for $n \in (n_i,n_{i+1})$ we have $p(n)^2 \geq p(n+1)p(n-1)$ for $\mu_1, \mu_2$ sufficiently small. By continuity one can assume that $\mu_2 \ne 0$. We want to prove convexity of 
\begin{align*}
	\tilde{V}(n) & = V(n) - \log (1+\mu_1 W(n)+\mu_2 V(n)) = V(n)-  \log\left( 1 + \mu_2 \left( V(n) + \frac{\mu_1}{\mu_2} W(n) \right) \right) \\
	& = -  \frac{\mu_1}{\mu_2} W(n) + V(n) + \frac{\mu_1}{\mu_2} W(n) -  \log\left( 1 + \mu_2 \left( V(n) + \frac{\mu_1}{\mu_2} W(n) \right) \right) \\
	& = -  \frac{\mu_1}{\mu_2} W(n) + h_{\mu_2}\left(  V(n) + \frac{\mu_1}{\mu_2} W(n) \right),
\end{align*}
where
\[
	h_\mu(t) = t - \log(1+\mu t). 
\]
The first term is affine. For small $\mu$ the function $h_\mu(t)$ is increasing and convex. Since the sequence $V+ \frac{\mu_1}{\mu_2} W$ is convex, it is enough to show that $f(g(n))$ is a convex sequence whenever $f$ is an increasing convex function and $g$ is convex. This is straightforward since
\[
	f(g(n)) \leq  f\left(\frac12g(n+1)+ \frac12 g(n-1) \right) \leq \frac12 f(g(n+1))  + \frac12 f(g(n-1)). 
\] 

Now we are left with checking our inequality in points $n=n_i$. But since then $V(n) < \frac12 V(n+1)+ \frac12 V(n-1)$, the inequality follows by a simple continuity argument. 

\end{proof}


\noindent \textit{Claim 2.} The sequences $1, V_{0}, V_{1}, \ldots, V_{k}$ are linearly independent.

\begin{proof}[Proof of Claim 2]
 Let $V_{-1}=1$. Let us consider $U_{-1} = V_{-1}$, $U_0 = V_0- V_{-1}, \ldots, U_k = V_k- V_{k-1}$. To prove that $V_i$ are linearly independent, it suffices to show that $U_i$ are linearly independent. Indeed, suppose that $\sum_{i=-1}^k b_i V_i \equiv 0$. This means that 
\[
b_{-1} U_{-1} + b_0(U_{-1}+U_0)+ \ldots + b_k(U_{-1}+U_0+\ldots + U_k) \equiv 0,
\]
which is 
\[
	(b_{-1}+\ldots+b_k) U_0 + (b_0+\ldots+b_k) U_1 + \ldots + b_k U_k \equiv 0. 
\]
If $U_i$ are linearly independent, it follows that $b_i+\ldots+b_k = 0$ for $i=-1,0,\ldots, k$, which easily leads to $b_i=0$ for all $i=-1,\ldots,k$. 

Now the fact that $U_i$ are linearly independent is easy since $U_i$ for $i \geq 1$ is supported in $[n_{i}+1,L]$. These intervals form a decreasing sequence, so in order to show that every combination $\sum_{i=-1}^k b_i U_i \equiv 0$ in fact has zero coefficients it is enough to evaluate this equality first at points $n=0,1$ to conclude that $b_{-1}=b_0=0$ (note that the support of $U_i$ for $i \geq 1$ is contained in $[2,L]$) and then consecutively at points $n_{1}+1, n_2+1, \ldots, n_k+1$ to conclude that $b_1= b_2=\ldots = b_k=0$.
\end{proof}

Combining Claim 1 and Claim 2 finishes the proof.

\end{proof}

Let us now consider the space $\mc{L}_I$ of all log-concave sequences $(p_i)_{i \in I}$ is an interval $I$. We shall identify the sequence $p$ with a vector in $\R^{|I|}$. Suppose we are given vectors $v_1, \ldots, v_d \in \R^{|I|}$ and real numbers $a_1,\ldots, a_d$. Let us introduce the polytope 
\[
P_d(a,v)=\{p: \scal{p}{v_i}=a_i, \ i=1,\ldots, d\} \cap [0,\infty)^{|I|}.
\] 
We will be assuming that this polytope is bounded, which will be the case in our applications. Let us now assume that we are given a convex continuous functional $\Phi:\R^{|I|} \to \R$. The following lemma is well known and can be found in the continuous setting in  \cite{FG06}.

\begin{lemma}\label{lem:convex}
The supremum of a convex continuous functional $\Phi:\R^{|I|} \to \R$ on $\mc{L}_I \cap P_d(a,v)$ is attained on some sequence having at most $d$ degrees of freedom.
\end{lemma}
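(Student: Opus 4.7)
The plan is to combine compactness with a local perturbation argument. First I would check that $K := \mc{L}_I \cap P_d(a,v)$ is closed in $\R^{|I|}$, since $\mc{L}_I$ is cut out by the closed conditions $p_i \geq 0$ and $p_i^2 \geq p_{i-1} p_{i+1}$, and $P_d(a,v)$ is an affine slice of the closed positive orthant. Combined with the standing boundedness assumption, $K$ is compact, so $\Phi$ attains its supremum on $K$. Let $M \subseteq K$ denote the closed, non-empty set of maximisers, and pick $p^* \in M$ that minimises the number of degrees of freedom over $M$; this minimum is attained because the number of degrees of freedom is an integer in $\{0, 1, \ldots, |I|\}$.

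Suppose for contradiction that $p^*$ has $d' \geq d+1$ degrees of freedom, witnessed by linearly independent sequences $q_1, \ldots, q_{d'}$ and some $\e > 0$. The linear map $q \mapsto (\scal{q}{v_1}, \ldots, \scal{q}{v_d})$, restricted to $W := \spann(q_1, \ldots, q_{d'})$, sends a space of dimension $d'$ into $\R^d$, and $d' > d$ forces a nontrivial kernel. Pick a nonzero $q = \sum c_i q_i$ in this kernel; after rescaling we may assume $|c_i| < \e/d'$, so $p^* + tq \in \mc{L}_I$ for $t \in (-1, 1)$, while $\scal{p^* + tq}{v_j} = a_j$ for every $j$, hence $p^* + tq \in K$ on this interval. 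Define
\[
    T^+ := \sup\{t \geq 0 : p^* + sq \in K \text{ for all } s \in [0, t]\},
\]
and the analogous $T^-$ on the other side; both are finite by boundedness of $K$ and are $\geq 1$ by the above. Closedness of $K$ yields $\tilde p := p^* + T^+ q \in K$, and similarly on the other side.

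On $t \in [-T^-, T^+]$ the function $t \mapsto \Phi(p^* + tq)$ is convex and attains its maximum at the interior point $t = 0$; the two-sided chord estimate $\Phi(0) \leq \frac{T^+}{T^++T^-}\Phi(-T^-) + \frac{T^-}{T^++T^-}\Phi(T^+) \leq \Phi(0)$ forces equality, and then the standard fact that a convex function equal to its chord at an interior point is affine on the interval (combined with $\Phi(0) = \Phi(\pm T^{\pm})$) shows it is constant on the segment. In particular $\tilde p \in M$. By the choice of $T^+$, some defining inequality of $\mc{L}_I$ that was strict at $p^*$ becomes tight at $\tilde p$: either $\tilde p_j = 0$ at a new coordinate, or $\tilde p_i^2 = \tilde p_{i-1}\tilde p_{i+1}$ at a new index $i$. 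The main technical point, and the step I would expect to spend the most care on, is to deduce that $\tilde p$ has strictly fewer degrees of freedom than $p^*$, which contradicts the minimality of $p^*$ and finishes the argument. The heuristic is that any linear subspace of admissible perturbation directions at $\tilde p$ must satisfy the linearisation of every tight constraint, and the newly tight constraint, living at a fresh index, contributes a linear equation independent of those coming from constraints already tight at $p^*$; the subtle part is that moving along $q$ can push a log-concavity constraint tight at $p^*$ off tight via the second-order term $t^2(q(i)^2 - q(i-1)q(i+1))$, so the set of tight constraints is not monotone and one needs an index-by-index linear-independence bookkeeping in the spirit of Claim~2 in the proof of Lemma~\ref{lem:ddf}.
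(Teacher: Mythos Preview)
Your compactness argument and the construction of the perturbation direction $q$ in the kernel of the constraint map are correct and essentially match the paper. The divergence is at the end: you push $p^*$ along $q$ to an endpoint $\tilde p$ and want to conclude that $\tilde p$ has strictly fewer degrees of freedom. As you yourself flag, this step is incomplete, and the difficulty is real. Moving along $q$ can slacken log-concavity constraints that were tight at $p^*$ via the second-order term, so the set of tight constraints at $\tilde p$ need not contain that at $p^*$; there is no monotonicity to lean on, and an index-by-index bookkeeping would have to rule out the possibility that more constraints loosen than tighten. I do not see a short way to close this.

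The paper sidesteps the issue entirely by passing to the convex hull. With $A=\mc{L}_I\cap P_d(a,v)$ and $K=\conv(A)$, the set $K$ is compact, the convex functional $\Phi$ attains its maximum on $K$ at an extreme point, and extreme points of $K$ lie in $A$. Then one only needs the local fact you already proved: if $p\in A$ has $k\geq d+1$ degrees of freedom, your kernel argument produces a nonzero $q$ with $p\pm\e q\in A$ for small $\e$, whence $p=\tfrac12(p+\e q)+\tfrac12(p-\e q)$ is not extreme in $K$. No boundary-pushing and no comparison of degrees of freedom at a \emph{new} point is required; non-extremality is decided at $p$ itself.
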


\begin{proof}
Let $A = \mc{L}_I \cap P_d(a,v)$. By compactness of $A$ the supremum $m$ of $\Phi$ on $A$ is attained. By convexity of $\Phi$ on $K=\conv(A)$ the maximum is the same as the maximum on $A$ and is attained in some point $x \in K$. Moreover, as we work in a finite dimensional Euclidean space, $K$ is also compact. A baby version of the Krein-Milman theorem shows that $x$ is a convex combination of extreme points of $K$, that is $x=\sum \lambda_i x_i$, where positive numbers $\lambda_i$ sum up to one. By convexity $\Phi$ attains its maximum on $K$ also in all the points $x_i$. Thus, the maximum of $\Phi$ on $K$ is attained in some extreme point of $K$. Clearly extreme points of $K$ must belong to $A$. It is therefore enough to show that if $p \in K$ has more than $d$ degrees of freedom, then $p$ is not an extreme point of $K$.

Suppose $p \in A$ with support $I' \subseteq I$ and there exist linearly independent sequences $q_1, \ldots, q_k$ supported in $I'$ and $\e>0$ such that for all $\delta_1,\ldots, \delta_k \in (-\e,\e)$ the sequence
\[
	p_\delta = p+ \delta_1 q_1 + \ldots + \delta_k q_k
\] 
is log-concave in $I'$ and thus also in $I$. Therefore, it belongs to $\mc{L}_I$. Note that the set of parameters $\delta=(\delta_1,\ldots, \delta_k)$ for which $p_\delta \in P_d(a,v)$ form a linear subspace of dimension at least $k-d$. If $k \geq d+1$ then this subspace is non-trivial and contains two antipodal points $\delta$ and $-\delta$. Note that $p=\frac12 p_{\delta}+\frac12 p_{-\delta}$ and thus $p$ is not an extreme point of $K$ as both $p_\delta$ and $p_{-\delta}$  belong to $K$.

     
\end{proof}

\section{Proof of Theorem \ref{thm:ent-var}}\label{sec:3}

\noindent \emph{Step 1.} We shall follow the strategy described in the introduction to reduce the inequality to the case of simple functions. By translation invariance and approximation arguments one can assume that $X$ has its  support contained in $[L]$. Let $p$ be the probability mass function of $X$. We shall maximise the convex functional $p \to \max_{z \in [L]} p(z)$ under three linear constraints gives by vectors $v_1=(1, \ldots, 1)$, fixing $p$ to be a probability distribution, $v_2=(0,1,\ldots,L$), fixing $\E X$, and $v_3=(0^2,1^2, \ldots, L^2)$, fixing $\E X^2$. Lemma \ref{lem:convex} thus shows that the maximum is attained for $p$ having at most 3 degrees of freedom, which by Lemma \ref{lem:ddf} implies that $p=e^{-V}$ where $V=\max(l_1,l_2)$, with $l_1,l_2$ being linear progressions. It is therefore enough to check the inequality for sequences of the form
\[	
	p(n)= C e^{-\beta_1(n-N)} \1_{[0,N]}(n) + C e^{-\beta_2(n-N)} \1_{[N+1,L]}(n) , \qquad N \leq L, \ \beta_1 \leq \beta_2.
\]

\noindent \emph{Step 2.} As in \cite{A23} we prove the inequality for $p$ monotone. We sketch the argument for completeness. We can assume that $p$ is non-increasing. In fact we shall not restrict ourselves to the above special functions on $[L]$, but consider general functions on $\Z_+ = \{0,1,\ldots\}$.    Consider geometric distribution $Y$ with parameter $\theta=(1+\E X)^{-1}$ so that $\E X = \E Y$.  The probability mass function of $Y$ will be denoted by $q$. Since $q$ is log-affine and $p$ is log-concave,  $p-q$ changes sign at most two times. In fact it changes sign precisely two times in points $k_1, k_2$, since in case of one sign change point $k_1$ we would get 
\[
0=  \E X - \E Y = \sum_k k (p(k)-q(k)) = \sum_k (k-k_1) (p(k)-q(k)),   
\]
contradiction, as  $(k-k_1) (p(k)-q(k))$ has fixed sign. The sign pattern of $p-q$ is $(-,+,-)$ and therefore 
\[
M(X) = \max_z p(z) = p(0) \leq q(0) = \max_z q(z)=M(Y) .
\]
 We claim that also $\E X^2 \leq \E Y^2$. Choose numbers $a,b$ such that $k^2 - ak-b=0$ for $k=k_1, k_2$. Then using $\E X = \E Y$ we get
\[
	\E X^2 - \E Y^2 = \sum_k k^2(p(k)-q(k)) = \sum_k (k^2-ak-b)(p(k)-q(k)) \leq 0, 
\] 
since the sign pattern of $k^2-ak-b$ is $(+,-,+)$ and thus the expression under the sum is non-positive. We conclude that
\[
	M(X)^2 \var(X)+M(X) \leq M(Y)^2 \var(Y)+M(Y) = 1.
\] 

\begin{remark*}
The fact that under $\E X = \E Y$ the pattern $(-,+,-)$ implies $\E \phi(X) \leq \E \phi(Y)$ for convex $\phi$ seems to be due to Barthe and Naor  \cite{BN02}. The proof technique presented here appeared in \cite{ENT18-2} in the context of moment inequalities. See also \cite{E21} for another application of this technique to moment inequalities and \cite{BNZ21} for an application in the context of Shannon entropy.     
\end{remark*}

\noindent \emph{Step 3.} Let us assume that $p$ is not monotone, in which case $C= \max_{z \in [L]} p(z)$. We now follow the strategy from \cite{A23}. Take $p_1 = e^{\beta_1}$, $p_2 = e^{-\beta_2}$ and $K=L-N$. Define also
\[
	S(x;N) = \sum_{n=1}^N x^n, \qquad S_1(x;N)= \sum_{n=1}^N n x^n \qquad \textrm{and} \qquad S_2(x;N)= \sum_{n=1}^N n^2 x^n.
\]
We shall also use
\[
	Z(x,y; N,K) = 1+S(x,N)+S(y,K).
\]
Then $\sum_{z \in [L]} p(z)=1$ implies 
\[
C (1+S(p_1;N)+S(p_2;K))=CZ = 1, \qquad Z = Z(p_1, p_2; N,K).
\]
Let us consider $\tilde{X} = X-n$ with its probability mass function 
\[
\tilde{p}(n)= p(n+N) = C p_1^{-n} \1_{[-N,0]}(n) + C p_2^{ n} \1_{[1,K]}(n).
\]
 Clearly $\var(X)=\var(\tilde{X})$. We have
\[
	\E \tilde{X}  = - C S_1(p_1;N)+C S_1(p_2;K), \qquad \E \tilde{X}^2 = CS_2(p_1; N) + CS_2(p_2;K).
\]
Therefore
\[
	\var(X) = \var(\tilde{X}) = \frac{S_2(x; N) + S_2(y;K)}{Z} - \frac{(S_1(x;N)- S_1(y;K))^2}{Z^2}.
\]
Our inequality is thus equivalent to
\[
	Z^4-Z^3-Z(S_2(p_1; N) + S_2(p_2;K))+ (S_1(p_1;N)- S_1(p_2;K))^2 \geq 0.
\]

\noindent \emph{Step 4.} Let us define
\[
P(x,y)=Z^4-Z^3-Z(S_2(x; N) + S_2(y;K))+ (S_1(x;N)- S_1(y;K))^2, \qquad Z=Z(x,y;N,K).
\]

We shall show that the polynomial inequality $P(x,y) \geq 0$ is satisfied for all $x,y \geq 0$ by showing that $P$ has nonnegative coefficients. We shall need the following lemmas.
\begin{lemma}\label{lem:sol-eq}
Let $S_0(x;N)=\sum_{n=0}^N x^n$. Let $s_l(k)$ denote the coefficient in front of $x^k$ in $S_0^l$.  Then 
\[
	s_l(k) = \left\{ \begin{array}{ll} {k+l-1 \choose l-1} & k \in [0,N]  \\
	{k+l-1 \choose l-1} - l {k+l-N-2 \choose l-1} & k \in [N+1,2N]
	 \end{array} \right. .
\]
\end{lemma}
\begin{proof}
The coefficient is the number of solutions to the equation $x_1+\ldots+x_l=k$ where $0 \leq x_i \leq N$, $i=1,\ldots,l$. If $k \leq N$ then the condition $x_i \leq N$ is superfluous and thus the number of solutions is well know and can be obtained by a standard application of the \emph{stars and bars} method. If $k>N$ then from this expression one needs to subtract the number of solutions with at least one number $x_i$ greater than $N$. If $k \leq 2N$, there is precisely one such number. We can choose its index $i$ in $l$ ways and define $\tilde{x_i}=x_i-N-1 \geq 0$. Then we are left with computing the number of solutions to $x_1+\ldots+ x_{i-1}+\tilde{x_i}+x_{i+1}+\ldots+ x_l = k-N-1$, where the variables are nonnegative.  Since $k-N-1 \leq N$, from the previous case we get that this equation has ${k+l-N-2 \choose l-1}$ solutions.   
\end{proof}

\begin{lemma}\label{lem:S1S2}
Let $S_i(x)=S_i(x;N)$, $i=0,1,2$ and let us write $S_0(x)S_2(x) - S_1(x)^2 = \sum_{k=0}^{2N} c_k x^k$.  Then
\[
	c_k = \left\{ \begin{array}{ll} {k+2 \choose 3} & k \in [0,N]  \\
	 {2N-k+2 \choose 3} & k \in [N+1,2N]
	 \end{array} \right. .
\]
\end{lemma}

\begin{proof}
We have 
\[
	S_0(x)S_2(x) - S_1(x)^2 = \frac12 \sum_{i,j=0}^N (i-j)^2 x^i x^j 
\]
and thus 
\[
c_k= \frac12 \sum_{\substack{i + j = k \\ 0 \leq i, j \leq N}} (i-j)^2 .
\]
If $k \in [0,N]$ then 
\[
c_k = \frac12 \sum_{i=0}^k (2i-k)^2 ={k+2 \choose 3},
\]
where the last equality can be checked by a direct computation.
If $k \in [N+1,2N]$ then
\[
	c_k = \frac12 \sum_{i=k-N}^N (2i-k)^2 =  \frac12 \sum_{i=0}^{2N-k} (2i -(2N-k))^2   ={2N-k+2 \choose 3},
\] 
by the previous equality.
\end{proof}

In order to prove that $P$ has nonnegative coefficients we write
$
	P(x, y) = \sum_{k = 0}^{4N}\sum_{l = 0}^{4K} p_{k, l}x^ky^l
$
and consider two cases.

\vspace{0.3cm}

\noindent \emph{Case 1.} Coefficients $p_{k,0}$ and $p_{0,k}$. The roles of $(x,N)$ and $(y,K)$ in $P(x,y)$ are symmetric, therefore it is enough to consider only $p_{k,0}$. Note that in this case we deal with coefficients of 
\[
P(x,0) = S_0^4(x)-S_0^3(x)- S_0(x) S_2(x) + S_1^2(x). 
\]
If $k>2N$ then the only contribution to $p_{k,0}$ comes from $S_0^4-S_0^3 = S_0^3(S_0-1)$, where both factors have nonnegative coefficients, so there is nothing to prove. If $k \in [0,N]$ then by Lemmas   \ref{lem:sol-eq} and \ref{lem:S1S2} we get
\[
	p_{k,0}= {k+3 \choose 3} - {k+2 \choose 2} - {k+2 \choose 3} = 0.
\]
Finally, if $k \in [N+1,2N]$ then we get
\begin{align*}
	p_{k,0} & = {k+3 \choose 3} - 4 {k -N+2 \choose 3} - {k+2 \choose 2} + 3 {k-N+1 \choose 2} - {2N-k+2 \choose 3 } \\
	&= \frac{1}{6} (k-N) \left(-2 k^2+4 k N-3 k+4 N^2+15 N+5\right),
\end{align*}
where again the last equality can be  verified directly. The first factor is positive and thus we have to show the positivity of the second factor, which is a concave quadratic function in $k$. Thus it is enough to check only $k=N, 2N$ in which case we get positive expressions $5 + 12 N + 6 N^2$ and $5 + 9 N + 4 N^2$. 

\vspace{0.3cm}

\noindent \emph{Case 2.} Coefficients $p_{k,l}$, where $k,l \geq 1$. It is enough to assume $1 \leq k \leq N$ and $1 \leq l \leq K$ since otherwise the only contribution comes from $Z^4-Z^3 = Z^3(Z-1)$, which has nonnegative  coefficients. In this case the coefficient in $Z(S_2(x;N)+S_2(y;K))$ is $k^2+l^2$, whereas the coefficient in $(S_1(x;N)-S_1(y;K))^2$ is $-2kl$, which together give contribution $-(k+l)^2$. Since $Z-1$ has nonnegative coefficients, the coefficients of $Z^3(Z-1)$ are at least as large as those of $(x+y)Z^3$. The coefficients of $Z^3=(1+S(x;N)+S(y,K))^3$ are at least as large as those of $3S(x;N)^2 S(y;K)+3S(x;N) S(y;K)^2$, which according to Lemma \ref{lem:sol-eq} are equal to $3 {k+1 \choose 2} + 3 {l+1 \choose 2}$. If suffices to observe that the coefficients of $(x+y)Z^3$ are therefore at least
\[
	3 \left( {k \choose 2} + {l+1 \choose 2} + {k+1 \choose 2} + {l \choose 2} \right) = 3(k^2+l^2) \geq (k+l)^2.
\]

\section{Proof of Theorem \ref{thm:poiss}}\label{sec:4}

\vspace{0.2cm}
\noindent \emph{Step 1.}
Let $n_0= \E X$ and let $\mu(n)$ be the probability mass function of $X$. Our goal is to prove the inequality $\mu(n_0) \geq e^{-n_0} \frac{n_0^{n_0}}{n_0!}$. By an approximation argument one can assume that $X$ has its  support contained in $[L]$. Note that $\mu(n)=\frac{1}{n!}p(n)$, where $p \in \mc{L}_{[L]}$.  We would like to maximize the linear (and thus convex) functional $\mu \to -\mu(n_0)$ under the constraints given by vectors $v_1=(\frac{1}{0!}, \frac{1}{1!}\ldots, \frac{1}{L!})$ (fixing $\mu$ to be a probability distribution) and $v_2=(0, \frac{1}{0!}\ldots, \frac{1}{(L-1)!})$, fixing the mean. Thus Lemma \ref{lem:convex} implies that the maximum is attained on sequences having at most two degrees of freedom and therefore for $\mu$ of the form $\mu(n)=c x_0^n \1_{[k,l]}(n)$ for some $0 \leq k \leq l$. As a consequence, in order to prove the inequality it is enough to consider only sequences $\mu$ of the form
\[
	\mu(n) = \frac{1}{f(x_0)} \cdot \frac{x_0^n}{n!} \1_{[k,l]}(n), \qquad \textrm{where} \quad f(x) = \sum_{i=k}^l \frac{x^i}{i!}. 
\]

\noindent \emph{Step 2.} One can assume that $f$ is non-constant. Clearly $n_0 = \E X=\sum_{i=k}^l i\cdot q(i)=x_0 \cdot \frac{f'(x_0)}{f(x_0)}$. Our goal is to prove the inequality
\[
	\frac{1}{f(x_0)} \cdot \frac{x_0^{n_0}}{n_0!}  \geq \frac{1}{e^{n_0}} \cdot \frac{n_0^{n_0}}{n_0! }.
\]
This simplifies to $f(x_0)\leq (\frac{ex_0}{n_0})^{n_0}$ which after taking the logarithm reads $\log f(x_0) \leq n_0(1+ \log(\frac{x_0}{n_0}))$. Recall that $n_0=x_0 \frac{f'(x_0)}{f(x_0)}$. Plugging this in gives the equivalent form 
\[
\log f(x_0) \leq x_0 \frac{f'(x_0)}{f(x_0)}\left(1-\log\left(\frac{f'(x_0)}{f(x_0)}\right) \right).
\]
It would therefore be enough to show that the function
\[
	h(x)=x\cdot \frac{f'(x)}{f(x) }-x\cdot \frac{f'(x)}{f(x) }\cdot \log\left(\frac{f'(x)}{f(x)}\right)-\log{f(x)}
\]
is nonnegative for all $x \geq 0$. Taking $x=x_0$ will then finish the proof.

\vspace{0.2cm}
\noindent \emph{Step 3.} By a direct computation we have 
\[
h'(x)=-\frac{\log{ \frac{f'(x)}{f(x)} }}{f^2(x)}\cdot (-x(f'(x))^2+xf(x)f''(x)+f(x)f'(x)).
\]

\noindent \emph{Claim 1.} For all $x\geq 0$ we have $-x(f'(x))^2+xf(x)f''(x)+f(x)f'(x)\geq 0$.

\begin{proof}[Proof of Claim 1.]
By Cauchy-Schwarz inequality
\begin{align*}
f(x)  (xf'(x)+x^2f''(x)) & =\left(\sum_{i=k}^l \frac{x^i}{i!}\right)\left(\sum_{i=k}^l \frac{x^i}{(i-1)!}+\frac{x^i}{(i-2)!}\right)= 
\left(\sum_{i=k}^l \frac{x^i}{i!} \right)\left(\sum_{i=k}^l \frac{ix^i}{(i-1)!}\right) \\ &  \geq \left(\sum_{i=k}^l \frac{x^i}{(i-1)!}\right)^2=(x f'(x))^2.
\end{align*}
The assertion follows by dividing both sides by $x$.
\end{proof}

\noindent \emph{Claim 2.} The function $\psi(x)= -\log{\frac{f'(x)}{f(x)}}$ has a unique zero $y_0 \in [0,\infty)$.

\begin{proof}
According to a theorem due to Gurvits \cite{G09-2} (see also \cite{G09, HNT21} for alternative proofs) a function of the form $\sum_{i=0}^\infty a_i \frac{x^i}{i!}$ is log-concave for $x \geq 0$ if the sequence $(a_i)_{i \geq 0}$ is log-concave. Thus $f(x)$ is log-concave for $x \geq 0$. Equivalently $\frac{f'(x)}{f(x)}$ is a decreasing function on $[0,\infty)$ and thus $\psi$ is increasing.

 If $f(0)=0$ then $\lim_{x\to 0^+}\frac{f'(x)}{f(x)}=\infty$ while $\lim_{x\to \infty}\frac{f'(x)}{f(x)}=0$. By intermediate value property $\psi$ has a unique zero in $[0,\infty)$. If $f(0)=f'(0)=1$ then  
$\psi(0)=0$ and $0$ is the unique zero of $\psi$.
\end{proof}

We can now easily finish the proof. By Claims 1 and 2 we see that $h'$ is nonpositive on $[0,y_0]$ and nonnegative on $[y_0,\infty)$. Therefore $h$ attains its minimum at $x=y_0$. It is therefore enough to check the inequality $h(y_0) \geq 0$. Clearly $\psi(y_0)=0$ implies that $f(y_0)=f'(y_0)$. Thus $h(y_0)=y_0-\log f(y_0)$. The inequality $h(y_0) \geq 0$ is therefore equivalent to $e^{y_0} \geq f(y_0)$ and is  obvious as $f(y_0)$ is a truncated sum defining the exponential function.

\subsection*{Acknowledgments} 
We would like to thank   Ioannis Kontoyiannis and Mokshay Madiman for useful discussions.

\end{document}